\theoremstyle{plain}
\newtheorem{theorem}{Theorem}
\newtheorem{corollary}{Corollary}
\newtheorem{example}{Example} 
\theoremstyle{definition}
\newtheorem{definition}{Definition}
\begin{document}

\title{$\mathbb{F}_p$ and $Z_p$ Valued Holomorphic Functions over Graphs}
\author{H. Mohades\\ Amirkabir University of Technology, Iran} 

\maketitle
\begin{abstract}
\noindent  \noindent The definition of a holomorphic function over a general measurable space $S$ endowed with a Markov process is defined by Zeghib and Barre.  In this article we consider holomorphic functions over graphs whose ranges are a given finite field or a cyclic group. Also we consider  a relation between $\mathbb{C}$-holomorphic functions over regular trees and the field of $p$-adic numbers. 

\vspace{.5cm}\leftline{{AMS subject Classification 2010:  05-XX,\, 11Txx
}}

\vspace{.5cm}\leftline{Keywords: Finite field  , Harmonic function, Holomorphic function,  Discrete Laplacian Operator,} 
\end{abstract}
\section{Introduction}

Let $R$ be a commutative ring. The definition of an $R$-holomorphic function over measurable space $S$ endowed by a Markov process is defined in \cite{Z}.  In the sense of \cite{Z} a function $\phi: S \rightarrow R$  is holomorphic if both $\phi$ and $\phi^2$ are harmonic with respect to the natural notion of Laplacian over $S$. 
In the case of Riemannian manifolds this notion of $\mathbb{C}$-holomorphic functions coincides with the concept of harmonic morphism from a manifold to $\mathbb{R}^2$. Harmonic morphism from a  geometric graph to $\mathbb{C}$ are constant maps. Nevertheless there are nonconstant   $\mathbb{C}$-holomorphic function over graphs and therefore the definition of $\mathbb{C}$-holomorphic functions is different from the notion of holomorphic function from a graph to $\mathbb{C}$.  A harmonic function  on a graph or a Riemannian manifold can be constructed by going to infinity (martin boundary) and then reproducing the  object using a Poisson integral. Versus this global way of producing a holomorphic function, the construction of an $R$-holomorphic function will be done locally.
In fact, there is a ''special  dynamics'' which allows one to construct the holomorphic functions. We show this phenomenon with an example on 3-regular graphs. The main part of this article is devoted to $\mathbb{F}_p$ or $Z_p$-valued holomorphic functions. In the case of $\mathbb{F}_p$ we provide a formula for the asymptotic behavior of the cardinality of the restriction of holomorphic functions to a neighborhood of a point in some regular trees. At the end of the article we determine a relation between $\mathbb{C}$-valued holomorphic functions and the field of $p$-adic numbers. In fact we prove that a subgroup of automorphisms of this field is the same as special set of  $\mathbb{C}$-valued holomorphic functions on a $(p+1)$-regular tree.

\section{Harmonic and holomorphic functions}
A graph is an ordered pair $X = (V,E)$ of disjoint sets $V$ and $E$ with a
map $o:E\rightarrow V^2$, where $V^2=\{\{a,b\}| a\in V$ and $b \in V \}$. Two elements $e\in V$ and $e' \in V$ are adjacent if $o^{-1}\{e,e'\}$ is not an empty set.  For an element $e\in V$ let $N(e)=\{e'\in V| e'\sim e\}$, where $e\sim e'$ means $e$ is adjacent to $e'$. Let $C(X,F)$ be the set of all $F$-valued functions on the set $V$. The elements of $C(X,F)$ will be called the functions over the graph $X$.
\begin{definition}
Let  $f \in C(X,F)$, the Laplacian $\Delta f$ is the function:
\begin{equation}\label{0}\Delta f(x)=\sum_{x\sim y}f(y)-f(x)\end{equation}
 \end{definition}
\begin{definition}
A function $g:X\rightarrow \mathbb{F}$ is called harmonic if $\Delta(g)=0.$
 The set of harmonic functions is a vector subspace of the set of all $\mathbb{F}$-valued functions on $X$.
\end{definition}
 The only $\mathbb{R}$-valued($\mathbb{C}$-valued) harmonic functions on a finite graph are constant functions. Despite of this fact on $\mathbb{R}$-holomorphic functions, generally the set of harmonic $F$-valued functions are not constant on a finite graph.
 \begin{example} Let $X$ be a cyclic graph with 6 vertices and let $F=Z_3$. The set of $Z_3$-valued harmonic functions on $X$ has nonconstant functions.
 \end{example}  
\begin{definition} Let $\phi:X\rightarrow \mathbb{F}$, where $X$ is a Riemannian manifold or a graph. We say that $\phi$ is holomorphic if both $\phi$
and its square $\phi^2$ are harmonic.
\end{definition}
 \begin{example} Let $X$ be a cyclic graph, then the set of $\mathbb{F}_q$-holomorphic functions on $X$($q$ is an odd number) is the same as the set of  constant functions.
 \end{example}  
\section{Holomorphic functions on finite fields}
For the sake of simplicity  we neglect finite fields of characteristic 2.
\begin{theorem} Let $\phi$ be a $\mathbb{F}_q$-valued holomorphic function over $X$ then  we have the following equations at each vertex.  
\begin{equation}\label{4} \sum_ia_i^2=\sum_ia_i=0 \end{equation}  when  $   a_i=\left\{
\begin{array}{c l}     
  \phi (e_i) - \phi (e_0) \ \   if\ \  e_i \sim e_0\\
    0\ \ \ \ \ \ \ \ \ \ \ \ \ \ \ \  otherwise
\end{array}\right.$
\end{theorem}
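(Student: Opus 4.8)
The plan is to translate the two harmonicity conditions—one for $\phi$, one for $\phi^2$—directly into the claimed identities by evaluating the Laplacian \eqref{0} at an arbitrary vertex $e_0$ and rewriting everything in terms of the increments $a_i = \phi(e_i) - \phi(e_0)$. Since $a_i$ is defined to be $0$ on non-neighbors, I can freely sum over all vertices while only the neighbors of $e_0$ contribute.

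First I would fix $e_0$ with neighbors $e_i$ and unwind $\Delta$. Harmonicity of $\phi$ means $\Delta\phi(e_0)=0$; grouping the $\phi(e_0)$ terms with the neighbor sum, this reads $\sum_i\bigl(\phi(e_i)-\phi(e_0)\bigr)=\sum_i a_i = 0$, which is exactly the first asserted equation. Next I would apply the same evaluation to $\phi^2$. Harmonicity of $\phi^2$ at $e_0$ gives $\sum_i\bigl(\phi(e_i)^2-\phi(e_0)^2\bigr)=0$, and the key algebraic step is to factor each summand by difference of squares and substitute $\phi(e_i)=\phi(e_0)+a_i$:
\begin{equation*}
\phi(e_i)^2-\phi(e_0)^2 = a_i\bigl(\phi(e_i)+\phi(e_0)\bigr) = a_i\bigl(2\phi(e_0)+a_i\bigr) = 2\phi(e_0)\,a_i + a_i^2 .
\end{equation*}
Summing over $i$ then yields $2\phi(e_0)\sum_i a_i + \sum_i a_i^2 = 0$.

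Finally I would feed in the first identity $\sum_i a_i=0$ to annihilate the cross term $2\phi(e_0)\sum_i a_i$, leaving $\sum_i a_i^2 = 0$. As $e_0$ was arbitrary, both equations hold at every vertex, proving the theorem. There is no genuine obstacle here—the argument is essentially a one-line expansion—so the only point deserving a moment's care is that the cross term is eliminated by the harmonicity of $\phi$ itself rather than by any hypothesis on the characteristic; consequently this particular identity is in fact characteristic-free, even though the ambient section restricts to odd characteristic (a restriction that will presumably be needed only for the finer counting results that follow).
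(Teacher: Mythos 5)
Your proof is correct and follows essentially the same route as the paper: expand $\Delta(\phi^2)(e_0)$ in terms of the increments $a_i$ and use the linear relation $\sum_i a_i=0$ to remove the cross term $2\phi(e_0)\sum_i a_i$. The only difference is cosmetic---the paper's displayed chain of equalities absorbs this cancellation silently, whereas you invoke harmonicity of $\phi$ explicitly, and your observation that the identity is characteristic-free is accurate.
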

\begin{proof} 
By the definition of a holomorphic function we have 
\[\Delta (\phi )(e_0)=\sum_{e_i\sim e_0}\phi(e_i)-\phi(e_0)=\sum_i a_i=0\] and \[\Delta (\phi^2)(e_0)=\sum_{e_i\sim e_0}(\phi^2(e_i)-\phi^2(e_0))=\sum_i(\phi(e_i)^2+\phi(e_0)^2)-2\phi(e_0)\sum_i \phi(e_i)=\]\[\sum_i(\phi(e_i)-\phi(e_0))^2=\sum_i a_i^2=0.\]
\end{proof}
 
\begin{theorem}\label{17}
There are nontrivial $\mathbb{F}_q$-valued holomorphic functions on a tree with no  vertex of degree less then 4.
\end{theorem}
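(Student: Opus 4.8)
The plan is to use the preceding theorem to replace holomorphicity by a purely \emph{local} condition, and then to build $\phi$ one vertex at a time, exploiting that a tree has no cycles. By that theorem, $\phi$ is $\mathbb{F}_q$-holomorphic exactly when, at every vertex $e_0$, the neighbour differences $a_i=\phi(e_i)-\phi(e_0)$ satisfy $\sum_i a_i=0$ and $\sum_i a_i^2=0$. I would root the tree at a vertex $r$ (a tree in which every vertex has degree $\ge 4$ has no leaves, hence is infinite) and define $\phi$ by a breadth-first induction: having already fixed $\phi$ on a vertex $v$ and on its parent, I assign values to the children of $v$ so as to make the local equation at $v$ hold. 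Since the tree has no cycles, the equation at $v$ involves only the already-fixed values $\phi(v)$ and $\phi(\text{parent})$ together with the child values being chosen now, so no consistency constraint ever couples two distinct vertices.

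Everything therefore reduces to one local solvability statement. Writing $a_0=\phi(\text{parent})-\phi(v)$ for the fixed parent difference and $a_1,\dots,a_m$ for the child differences, where $m=d_v-1\ge 3$, I must solve
\[
\sum_{j=1}^m a_j=-a_0=:s,\qquad \sum_{j=1}^m a_j^2=-a_0^2=-s^2=:u .
\]
Because $m\ge 3$, I would set $a_4=\cdots=a_m=0$, keep $a_3=\tau$ as a free parameter, and solve the two-variable subsystem $a_1+a_2=s-\tau$, $a_1^2+a_2^2=u-\tau^2$. Then $a_1,a_2$ are the roots of $X^2-(s-\tau)X+\tfrac12\big((s-\tau)^2-(u-\tau^2)\big)$, which lie in $\mathbb{F}_q$ precisely when the discriminant
\[
D(\tau)=2(u-\tau^2)-(s-\tau)^2=-3\tau^2+2s\tau+(2u-s^2)
\]
is a square in $\mathbb{F}_q$.

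The heart of the argument is to produce such a $\tau$. For $q$ odd and $p\neq 3$ the polynomial $D$ has nonzero leading coefficient, so completing the square exhibits its image as $\alpha S+\beta$ with $S$ the set of squares, a set of size $(q+1)/2$; the squares of $\mathbb{F}_q$ also form a set of size $(q+1)/2$, and since $(q+1)/2+(q+1)/2>q$ these two sets must intersect, yielding a $\tau$ with $D(\tau)$ a square. The case $p=3$ I would dispose of by hand: there $D(\tau)=2s\tau$ is affine, surjective onto $\mathbb{F}_q$ when $s\neq0$ and identically $0$ when $s=0$, so a good $\tau$ exists either way. At the root there is no parent, and I want $\phi$ non-constant; here I would instead invoke Chevalley--Warning, noting that on the hyperplane $\sum_{i=1}^{d_r}c_i=0\cong\mathbb{F}_q^{\,d_r-1}$ with $d_r-1\ge3$ the quadratic form $\sum_i c_i^2$ has more than one zero, hence a \emph{nonzero} isotropic vector, which forces $\phi$ to differ between $r$ and some neighbour.

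I expect the main obstacle to be exactly this existence of a good $\tau$, that is, guaranteeing uniformly in $q$ that the discriminant $D(\tau)$ meets the squares; the pigeonhole count above is what makes it work for every odd $q$, but it must be supplemented by the separate treatment of characteristic $3$ (where $D$ degenerates) and by the Chevalley--Warning step at the root that secures non-triviality. The fact that choices at different vertices never have to be reconciled — a free gift of the tree structure — is what keeps the construction from meeting any global obstruction.
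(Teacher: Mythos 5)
Your proof is correct, and its skeleton coincides with the paper's: both reduce holomorphy to the local equations of Theorem 1, seed the function at one vertex by a Chevalley-type argument (you apply Chevalley--Warning to the quadratic form restricted to the hyperplane $\sum_i c_i=0$; the paper applies the Chevalley bound directly to the pair of a linear and a quadratic form in $\deg(e_0)\ge 4$ variables, degree sum $3<4$), and then extend greedily outward along the tree, which is legitimate for exactly the reason you give --- the equation at a vertex couples only its fixed value and its parent's with the fresh child values. Where you genuinely diverge is in the local-extension lemma. The paper eliminates one child variable via the linear equation and asserts the quadratic constraint becomes $\sum_{i=1}^{\deg(e_1)-2}x_i^2=b$, then quotes Lidl--Niederreiter for the $\ge q-1$ solutions of $x_1^2+x_2^2=b$; that elimination is stated loosely, since the quadratic part after substitution has Gram matrix $I+J$ with determinant $\deg(e_1)-1$, so the claimed nondegenerate diagonal shape is not automatic when $p$ divides $\deg(e_1)-1$. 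Your route --- freeze all but three child differences, keep $a_3=\tau$ free, and force the discriminant $D(\tau)=-3\tau^2+2s\tau+(2u-s^2)$ to be a square by the pigeonhole count $(q+1)/2+(q+1)/2>q$, treating the degenerate case $p=3$ by hand --- is self-contained and uniform in odd $q$; in effect you re-prove, rather than cite, the fact that a nondegenerate binary quadratic form over $\mathbb{F}_q$ represents every element, and in exchange you cleanly cover the characteristic-$3$ degeneration that the paper's sketch glosses over. One small point: both arguments divide by $2$, so your proof, like the paper's, lives under the standing exclusion of characteristic $2$ announced at the start of Section 3; you should say so explicitly rather than leave it implicit in the phrase ``for $q$ odd.''
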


\begin{proof}
By the extension of the Chevalley's theorem we know that a set of equations consisting of a  quadratic form and a linear one on more than 4 variables has nontrivial solutions  on $\mathbb{F}_p$. So if we let $\phi(e_0)=c$, then we can find values for $\phi$ on $N(e_0)$ which are not equal with $c$. Now we extend this to the points which are joint to $e_0$ by a path of length 2. If $\phi(e_1)=d$ then we have the following set of equations at the vertex $e_1$. \[\left\{
\begin{array}{c}     
  (c-d)^2 +\sum_{i=1}^{deg(e_1)-1}(y_i-d)^2=0\\
 (c-d)+\sum_{i=1}^{deg(e_1)-1}(y_i-d)=0
\end{array}\right.\]  Let $y_i-d=x_i$. Now, we obtain $\sum_{i=1}^{deg(e_1)-2}x_i^2=b$ for some $b \in \mathbb{F}_p$. We know that the equation
\[x_1^2 +x_2^2=b\] has at least $p-1$ solutions \cite{R}.  Considering $deg(e_1)-2\geq2$, we are able to extend $\phi$ to a holomorphic function.
\end{proof}
\begin{theorem}\label{a}
Let $Hol_{e_0,s}(\mathbb{F}_q)$ denotes the restriction of the set of $\mathbb{F}_p$-valued holomorphic functions $\phi$ such that  $\phi(e_0)=s$, to  $N(e_0)$. Then $card(Hol_{e_0,s}(\mathbb{F}_q))=q^{deg(e_0)-2}+(q-1)q^{\frac{deg(e_0)-3}{2}}\eta$ when $\eta \in \{1,-1\}.$ 
\end{theorem}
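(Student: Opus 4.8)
The plan is to reduce the count to the number of $\mathbb{F}_q$-points of an explicit affine variety and then invoke the classical formula for the number of zeros of a nondegenerate quadratic form over a finite field.

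First I would reinterpret $Hol_{e_0,s}(\mathbb{F}_q)$. Write $d=\deg(e_0)$ and let $e_1,\dots,e_d$ be the neighbours of $e_0$. A function on $N(e_0)$ is the same datum as the tuple $a_i=\phi(e_i)-s$, $i=1,\dots,d$. By the equations \eqref{4}, if $\phi$ is holomorphic with $\phi(e_0)=s$ then $\sum_{i=1}^d a_i=0$ and $\sum_{i=1}^d a_i^2=0$; conversely, the extension mechanism in the proof of Theorem~\ref{17} shows that any tuple satisfying these two equations is the restriction of a genuine holomorphic function, and distinct tuples give distinct restrictions. Hence
\[
 card(Hol_{e_0,s}(\mathbb{F}_q))=\#\{(a_1,\dots,a_d)\in\mathbb{F}_q^d : \textstyle\sum_i a_i=0,\ \sum_i a_i^2=0\},
\]
a number independent of $s$, since translation in the target is a bijection on holomorphic functions.

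Next I would eliminate the linear equation. Solving $a_d=-(a_1+\cdots+a_{d-1})$ identifies the hyperplane $\sum_i a_i=0$ with $\mathbb{F}_q^{d-1}$, and the quadratic condition becomes $Q(a_1,\dots,a_{d-1})=0$ with $Q=\sum_{i=1}^{d-1}a_i^2+\bigl(\sum_{i=1}^{d-1}a_i\bigr)^2$. The Gram matrix of $Q$ is $I_{d-1}+J$, where $J$ is the all-ones matrix; its eigenvalues are $d$ (once) and $1$ (with multiplicity $d-2$), so $\det(I_{d-1}+J)=d$. Thus $Q$ is a nondegenerate quadratic form in $n=d-1$ variables precisely when $p\nmid d$, with discriminant $\Delta=d$ up to squares. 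This is where the hypotheses are genuinely used: the clean formula needs $p\nmid\deg(e_0)$, while the integrality of the exponent $(\deg(e_0)-3)/2$ forces $\deg(e_0)$ odd, i.e. $n$ even.

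Finally I would apply the standard count of zeros of a nondegenerate quadratic form over $\mathbb{F}_q$ (Lidl--Niederreiter): for an even number $n$ of variables the number of solutions of $Q=0$ equals $q^{n-1}+(q-1)q^{n/2-1}\,\eta\bigl((-1)^{n/2}\Delta\bigr)$, where $\eta$ is the quadratic character. Substituting $n=d-1$ and $\Delta=d$ yields exactly $q^{d-2}+(q-1)q^{(d-3)/2}\eta$ with $\eta=\eta\bigl((-1)^{(d-1)/2}d\bigr)\in\{1,-1\}$, as claimed. The main obstacle is twofold: first, justifying that \emph{every} solution of the two local equations at $e_0$ is realized by a global holomorphic function (so the count is the exact point count, not merely an upper bound), which rests on the extension argument of Theorem~\ref{17}; and second, the bookkeeping around the diagonalization of $I+J$ and the correct case of the quadratic-form formula—checking nondegeneracy ($p\nmid d$) and tracking the discriminant to pin down the sign $\eta$.
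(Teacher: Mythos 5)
Your proposal is correct, and it funnels into the same key lemma as the paper --- the classical Lebesgue (Lidl--Niederreiter) count of zeros of a nondegenerate quadratic form in $\deg(e_0)-1$ variables --- but the reduction you use is genuinely different from the paper's. The paper replaces the system $\sum_i a_i=\sum_i a_i^2=0$ by the single pairwise-difference equation $\sum_{i<j}(a_j-a_i)^2=0$, which rests implicitly on the identity $\sum_{i<j}(a_i-a_j)^2=d\sum_i a_i^2-(\sum_i a_i)^2$ with $d=\deg(e_0)$ (a valid substitute only on the hyperplane $\sum_i a_i=0$, and only when $p\nmid d$), and then asserts without computation that this is equivalent to a nondegenerate diagonal form in $d-1$ variables. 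You instead eliminate $a_d$ outright and work with the Gram matrix $I_{d-1}+J$ of the resulting form $Q=\sum_{i=1}^{d-1}a_i^2+(\sum_{i=1}^{d-1}a_i)^2$. Your route buys three things the paper leaves implicit: it isolates the exact hypotheses under which the stated formula holds, namely $p\nmid\deg(e_0)$ (nondegeneracy, consistent with Theorem \ref{b}, where $p\mid\deg$ yields the different count $q^{p-3}$) and $\deg(e_0)$ odd (so $n=d-1$ is even and the exponent $(d-3)/2$ is an integer, which selects the even-variable case of the counting formula); it pins down $\eta$ concretely as the quadratic character evaluated at $(-1)^{(d-1)/2}d$, rather than an unspecified sign in $\{1,-1\}$; and it addresses a point the paper passes over in silence, that every solution of the two local equations at $e_0$ is realized by a global holomorphic function, via the extension mechanism of Theorem \ref{17} (which, note, requires the ambient graph to be a tree with all degrees at least $4$, so that hypothesis should be carried along). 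One small repair to your bookkeeping: over $\mathbb{F}_q$ the spectral decomposition of $I+J$ can collapse when $p\mid d-1$ (the all-ones eigenvector then lies in the zero-sum hyperplane), so you should justify $\det(I_{d-1}+J)=d$ by the rank-one-update identity $\det(I+\mathbf{1}\mathbf{1}^{T})=1+(d-1)$ rather than by listing eigenvalues; since nondegeneracy and the discriminant class only use the determinant, nothing else in your argument changes.
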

\begin{proof} 
From the equation \ref{4} it is enough to find the number of solutions of the equation \[\sum_{1<i<j,j=2}^{deg(e_0)}(a_j-a_i)^2=0\]
The number of solution of this equation is equal with the number of solutions of the nondegenerated diagonal  quadratic  equation in $deg(e_i)-1$ variables. Now the theorem  is a result of Lebesgue theorem on the counting of number of solutions of a quadratic equation.
\end{proof}
Now we provide some facts on  the cardinality of the restriction of holomorphic functions to a finite neighborhood of a vertex.
\begin{theorem}\label{b}
Let $G$ be a tree and p divides the degree of each vertex when p is the characteristic of the field $\mathbb{F}_q. $Let $Hol_{e_0,e_1,s,u}(\mathbb{F}_q)$ denotes the set of $\mathbb{F}_q$ valued holomorphic functions $\phi$ such that  $\phi(e_0)=s,\phi(e_1)=u$, restricted to  the set $N(e_1)$. Then $card(Hol_{e_0,e_1,s,u}(\mathbb{F}_p))=q^{p-3}$.
\end{theorem}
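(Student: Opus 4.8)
The plan is to reduce the statement to counting the solutions of the two local equations of Theorem 1 at the vertex $e_1$, and then to evaluate that count by the Lebesgue-type formula already invoked in Theorem \ref{a}. Write $d=\deg(e_1)$ and list the neighbours of $e_1$ as $e_0$ together with $d-1$ further vertices. Since $\phi(e_0)=s$ and $\phi(e_1)=u$ are fixed, the edge to $e_0$ contributes the constant $c:=s-u$ to equation \ref{4}, while the values of $\phi$ on $N(e_1)\setminus\{e_0\}$ are the unknowns $a_1,\dots,a_{d-1}$. Thus by \ref{4} the restrictions we must count are exactly the solutions in $\mathbb{F}_q^{\,d-1}$ of
\[ \sum_{i=1}^{d-1} a_i=-c,\qquad \sum_{i=1}^{d-1} a_i^2=-c^2 . \]
Because $G$ is a tree, each such local solution propagates to a genuine holomorphic function on all of $G$ exactly as in the proof of Theorem \ref{17}: the equation at any other vertex of $N(e_1)$, or at $e_0$, only involves $u$ together with vertices \emph{outside} $N(e_1)$, which can be chosen freely. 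Hence no further constraint is placed on the restriction, and the count is faithful.

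The next step is to collapse this pair of equations into a single homogeneous diagonal quadratic equation, so that Lebesgue's theorem applies directly. Here the hypothesis $p\mid d$ enters twice. First, $p\mid d$ forces $p\nmid (d-1)$, so $n:=d-1$ is invertible in $\mathbb{F}_q$ and I may centre the variables by setting $a_i=-\tfrac{c}{n}+t_i$. The linear equation then becomes $\sum_i t_i=0$, and a short expansion turns the quadratic equation into $\sum_i t_i^2=R$ with $R=-c^2-\tfrac{c^2}{n}=-c^2\,\tfrac{n+1}{n}$. Second, and crucially, $n+1=d\equiv 0\pmod p$, so the inhomogeneous term collapses: $R=0$. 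The problem is thereby reduced to counting the zeros, on the hyperplane $H=\{\sum_i t_i=0\}$, of the diagonal form $\sum_{i=1}^{n} t_i^2$.

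It remains to count these zeros. I would first check that the restriction of $\sum t_i^2$ to $H$ is nondegenerate: its radical is $H\cap\langle(1,\dots,1)\rangle$, and $(1,\dots,1)\notin H$ precisely because $\sum_i 1=n\neq 0$ in $\mathbb{F}_q$. Eliminating one coordinate therefore presents $\sum t_i^2\vert_H$ as a nondegenerate diagonal quadratic form in $d-2$ variables, whose number of zeros is governed by the counting theorem used in Theorem \ref{a}. When the number of variables $d-2$ is odd the formula carries no character term and the zero count is exactly $q^{(d-2)-1}=q^{d-3}$; for $d=p$ this is the asserted $q^{p-3}$.

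I expect the genuine obstacle to be the bookkeeping of parity. The homogeneous count $q^{d-3}$ is exact only when $d-2$ is odd; for even $d$ a correction term of the shape $(q-1)\,q^{(d-2)/2-1}\eta(\cdot)$ survives, so the clean value depends on $\deg(e_1)$ being odd (in particular on $\deg(e_1)=p$). The care needed is thus to isolate the double role of $p\mid d$ — it both keeps $n=d-1$ invertible, legitimising the centring substitution, and annihilates $R$, turning the system into a cone — and to record explicitly the parity hypothesis under which the Lebesgue character term vanishes and the answer reduces to the single power $q^{p-3}$.
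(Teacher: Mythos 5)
Your proposal is correct and takes essentially the same route as the paper: the paper likewise reduces to the pair $\sum_{j=1}^{p-1}x_j=-t$, $\sum_{j=1}^{p-1}x_j^2=-t^2$ at $e_1$ and reads off $q^{p-3}$ from the Lidl--Niederreiter count for a simultaneous linear--quadratic system, where its computation $c=b_0^2-a_0b=pt^2=0$ is precisely your characteristic-$p$ cancellation $R=0$; you merely re-derive that formula by centering on the hyperplane and invoking the odd-dimensional nondegenerate zero count instead of citing it, and you add the (correct, and omitted by the paper) check that on a tree every local solution extends, so the count is faithful. Your closing parity caveat is also well taken: the paper's proof, like yours, silently assumes $\deg(e_1)=p$ exactly (it indexes the neighbours by $j=1,\dots,p-1$), so the stated hypothesis that $p$ merely \emph{divides} each degree is weaker than what the argument actually uses, since for $\deg(e_1)=kp$ with $k>1$ the number of free variables $d-2$ becomes even and a character correction term survives.
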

\begin{proof}
For $e_j\sim e_1\ \ , j=1,...,p-1,$ let $x_j=\phi(e_j)-u$, also let $t=u-s$. Then, we have the following set of equations.\[\large {\left\{\begin{array}{c}     
 \sum_{j=1}^{p-1}x_j^2=-t^2\\
 \sum_{j=1}^{p-1}x_j=-t
\end{array}\right.}\] Now we are in the set up   of \cite{R} (page 341).  Where  $ b=\sum_jb_j^2a_j^{-1}=p-1,$ and $c=b_0^2-a_0b=(-t)^2-(p-1)(-t^2)=pt^2=0$. The number of variables is equal with $p-1$ which is an even number so the cardinality of solutions is $q^{p-3}$.  
\end{proof}
\begin{corollary}
In a p-regular graph the cardinality of the set of functions obtained by the  restriction of $\mathbb{F}_q$-valued holomorphic functions to an r-neighborhood of $e_0$  is equal with\\ \[[q^{p-2}+(q-1)q^{\frac{p-3}{2}}\eta]q^{(p-3)(r-1)}.\]
\end{corollary}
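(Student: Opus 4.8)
The plan is to induct on $r$, peeling the ball $B_r(e_0)$ one sphere at a time and exploiting the tree structure (consistent with Theorems \ref{a} and \ref{b}) to decouple the local holomorphicity constraints. Write $S_0=\{e_0\}$, $S_1=N(e_0)$, and in general $S_k$ for the sphere of radius $k$; in a $p$-regular tree every vertex $v\in S_k$ with $k\ge 1$ has exactly one neighbour in $S_{k-1}$ (its parent) and its remaining $p-1$ neighbours in $S_{k+1}$ (its children). The key structural remark is that holomorphicity is local: by the first theorem it amounts, at each vertex $v$, to the pair of equations $\sum_i a_i=\sum_i a_i^2=0$ in the increments $a_i=\phi(e_i)-\phi(v)$ over $e_i\sim v$ (the local equations \eqref{4}). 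Thus the equation based at $v$ couples only the values of $\phi$ on $\{v\}\cup N(v)$, i.e. on $v$, its parent and its children. Fixing $\phi(e_0)=s$, the $r=1$ case is exactly $Hol_{e_0,s}(\mathbb{F}_q)$, so Theorem \ref{a} with $\deg(e_0)=p$ supplies the bracketed factor $q^{p-2}+(q-1)q^{(p-3)/2}\eta$.

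For the inductive step, suppose $\phi$ has been assigned on $B_k(e_0)$ and count the admissible extensions to $S_{k+1}$. By the locality remark the only constraints involving the new values on $S_{k+1}$ are the holomorphicity equations based at the vertices of $S_k$. Fix $v\in S_k$ with value $\phi(v)=u$ and parent value $s'$; both are already determined, so the number of admissible assignments to the $p-1$ children of $v$ is precisely $card(Hol_{e_0,e_1,s',u}(\mathbb{F}_q))=q^{p-3}$ by Theorem \ref{b}, whose hypothesis $p\mid \deg$ holds because the graph is $p$-regular. Since the children-sets of distinct vertices of $S_k$ are disjoint and the equation at one vertex of $S_k$ never mentions the children of another, these choices are mutually independent, and the number of extensions from radius $k$ to radius $k+1$ is $q^{(p-3)\,|S_k|}$. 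Multiplying the base factor by these contributions over $k=1,\dots,r-1$ gives the cardinality of the restriction to $B_r(e_0)$.

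The step I expect to be the main obstacle is the bookkeeping that turns the accumulated exponent $(p-3)\sum_{k=1}^{r-1}|S_k|$ into the closed form $(p-3)(r-1)$ of the statement, which is where one must commit to the precise meaning of the $r$-neighbourhood. One genuine ingredient should be recorded first and is harmless: the count $q^{p-3}$ in Theorem \ref{b} is independent of the propagated data $(s',u)$, since in that proof it enters only through $c=pt^2=0$, so the same factor $q^{p-3}$ occurs at every vertex and on every sphere regardless of the values pushed out from the interior, and it is this uniformity that lets the local counts multiply. The remaining point is the vertex count per sphere: along a single geodesic each sphere meets the relevant path in one vertex and the exponent is literally $(p-3)(r-1)$, whereas for the full ball $|S_k|=p(p-1)^{k-1}$ and the sum is larger. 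It is worth noting that when $p=3$ the per-vertex factor is $q^{p-3}=1$, so a holomorphic function on the ball is already determined by its values on $S_1$ and the formula is exact; the delicate part of the argument is therefore to fix the neighbourhood convention under which the same collapse is intended for general $p$.
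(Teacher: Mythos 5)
Your route is exactly the one the paper's one-line proof gestures at---Theorem \ref{a} for the first sphere, then Theorem \ref{b} applied vertex by vertex on each subsequent sphere---and your execution is correct: the locality of the equations \ref{4} in a tree, the disjointness of the children-sets, and the fact that the count $q^{p-3}$ in Theorem \ref{b} is independent of the propagated pair $(s',u)$ (your observation about $c=pt^2=0$) together give, for the ball of radius $r$ with $\phi(e_0)$ fixed,
\[
\bigl[q^{p-2}+(q-1)q^{\frac{p-3}{2}}\eta\bigr]\,q^{(p-3)\sum_{k=1}^{r-1}|S_k|},\qquad |S_k|=p(p-1)^{k-1}.
\]
The obstacle you flag at the end is therefore not a bookkeeping step you failed to find: under the standard reading of the $r$-neighborhood as the ball of radius $r$, the printed exponent $(p-3)(r-1)$ is simply not what this method yields for $p>3$ and $r\geq 2$, because it charges one application of Theorem \ref{b} per sphere rather than one per vertex of the sphere. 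Concretely, for $p=5$ and $r=2$ the sphere $S_1$ has five vertices, each contributing $q^{2}$, so the extension factor is $q^{10}$, not the $q^{2}$ the corollary asserts; the two expressions agree only when $p=3$ (where $q^{p-3}=1$ and everything collapses, as you note) or when each sphere meets the relevant set in a single vertex, i.e.\ along a geodesic. The paper's proof, ``a straightforward application of theorems \ref{a} and \ref{b},'' contains no counting whatsoever, so it hides exactly this per-sphere versus per-vertex discrepancy; your derivation is the correct instantiation of the intended argument, and the honest conclusion is that the closed form in the statement is erroneous as written, with the correct exponent being $(p-3)\,p\,\frac{(p-1)^{r-1}-1}{p-2}$.

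Two minor points you should make explicit rather than leave implicit. First, Theorem \ref{b} is stated for trees, so the corollary needs ``$p$-regular tree'' (as your parent/children decomposition silently assumes); in a $p$-regular graph with cycles the equations at distinct vertices of $S_k$ can share variables and your independence claim fails. Second, the bracketed factor comes from Theorem \ref{a} with the central value $\phi(e_0)=s$ fixed; if the corollary intends the central value to be free, every count above acquires an extra factor of $q$. Neither point affects the main diagnosis: your argument is sound, and the gap lies in the paper's statement, not in your proof.
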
\begin{proof}
The proof is an straightforward application of theorems \ref{a} and \ref{b}. 
\end{proof}
\begin{corollary}
In a 3-regular tree$(Tr_3)$, there are only finitely many $\mathbb{F}_{3^n}$-valued holomorphic  functions up to permutation.
\end{corollary}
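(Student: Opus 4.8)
The plan is to exploit the special degeneracy, peculiar to characteristic $3$, of the defining system at a vertex of degree $3$. Although $Tr_3$ has infinitely many vertices, so that a priori there could be a huge supply of functions, I expect the local equations to be so rigid that only finitely many survive. By Theorem 1, if $\phi$ is $\mathbb{F}_{3^n}$-valued and holomorphic then at every vertex $e_0$ the three differences $a_1,a_2,a_3$ to its neighbours satisfy $a_1+a_2+a_3=0$ and $a_1^2+a_2^2+a_3^2=0$. First I would extract the second elementary symmetric function: squaring the linear relation and using $\mathrm{char}\neq 2$ gives $a_1a_2+a_1a_3+a_2a_3=0$ as well, so $a_1,a_2,a_3$ are precisely the three roots of $t^3-c=0$, where $c=a_1a_2a_3$.

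The crux of the argument is that in characteristic $3$ this polynomial is a perfect cube: since the Frobenius $x\mapsto x^3$ is a bijection of $\mathbb{F}_{3^n}$ we may write $c=d^3$, whence $t^3-c=(t-d)^3$ and therefore $a_1=a_2=a_3=d$. Equivalently, setting $u=a_1/a_2$ one checks directly that $u^2+u+1=(u-1)^2$ in characteristic $3$, forcing $u=1$. This is the step I expect to be the heart of the matter, and it is exactly where the hypotheses $p=3=\deg$ are indispensable: this is the degenerate situation excluded from Theorem \ref{a} and handled by Theorem \ref{b}, and in any other characteristic the system would admit many more solutions.

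Having shown that at each vertex all three neighbour-differences coincide, I would propagate this rigidity across the tree. Fix a base vertex $e_0$, put $s=\phi(e_0)$ and let $d$ be the common difference there, so every neighbour of $e_0$ takes the value $s+d$. At any neighbour $w$ the difference back to $e_0$ is $-d$, and since all differences at $w$ are equal, every further neighbour of $w$ again takes the value $s$. Because $Tr_3$ is a tree the distance to $e_0$ is well defined, so this forces $\phi$ to depend only on the parity of that distance: it equals $s$ on one part of the bipartition and $s+d$ on the other. Conversely any such two-valued function is holomorphic, since at each vertex the three differences are all $\pm d$ and $3d=3d^2=0$ in characteristic $3$.

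Finally I would count. The function is completely determined by the pair $(s,d)\in \mathbb{F}_{3^n}\times\mathbb{F}_{3^n}$, so there are exactly $3^{2n}$ holomorphic functions on $Tr_3$; the same bound also follows from Theorem \ref{b}, whose extension count $q^{p-3}=q^{0}=1$ for $p=3$ shows that a holomorphic function is uniquely determined by its values on a single edge, of which there are only $q^2$ choices. In particular the total number is finite, hence a fortiori finite up to any permutation symmetry, whether the tree automorphisms permuting vertices or the Galois action permuting the field values, which is the assertion of the corollary.
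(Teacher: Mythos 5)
Your proof is correct, and it is both more self-contained and strictly stronger than the paper's, which disposes of this corollary in one line by citing Theorem \ref{b}: for $p=3$ the extension count $q^{p-3}=q^{0}=1$ means a holomorphic function is determined by its values on a single edge, so there are at most $q^{2}$ of them --- exactly the shortcut you record at the end. Your main argument instead re-derives the rigidity from scratch: from the vanishing of the first two power sums you get $e_1=e_2=0$, so the three differences at a vertex are the roots of $t^3-c$, which in characteristic $3$ is the perfect cube $(t-d)^3$ with $d^3=c$ (Frobenius being bijective on $\mathbb{F}_{3^n}$), forcing $a_1=a_2=a_3=d$; propagation through the tree then classifies all holomorphic functions as the $q^{2}$ bipartition functions taking values $s$ and $s+d$ according to the parity of the distance to $e_0$, and you check the converse via $3d=3d^2=0$. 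This buys three things the paper's citation does not: an exact count $3^{2n}$ rather than mere finiteness, a proof that nonconstant examples actually exist, and independence from the sign conventions in Theorem \ref{b}, where incidentally the linear equation should read $\sum_j x_j = t$ rather than $-t$ (harmless for the count, since $x\mapsto -x$ preserves the quadratic; your direct computation, giving value $s$ at the far neighbours of $e_1$, is the correct one). Your side remark is also accurate: this is precisely the degenerate case where the Lebesgue-type formula of Theorem \ref{a} breaks down --- on the hyperplane $a_1+a_2+a_3=0$ the form $\sum a_i^2$ becomes $2(a_1-a_2)^2$ in characteristic $3$, so the local count is $q$, not $q+(q-1)\eta$ --- which is why the $p\mid \deg$ hypothesis of Theorem \ref{b} is the relevant one here.
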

\begin{proof} It is a result of Theorem \ref{b}.
\end{proof} 

\begin{theorem}
Let L be a finite graph with $n$ vertices and $|E|$ edges. Let $|E|>3n$ $(\chi(L)>2n)$. Then there are nontrivial $\mathbb{F}_q$-valued holomorphic functions over L.
\end{theorem}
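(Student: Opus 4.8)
The plan is to reduce the existence of a nontrivial holomorphic function to a counting statement for a polynomial system over $\mathbb{F}_q$ and then to invoke the Chevalley--Warning theorem, exactly in the spirit of the proof of Theorem \ref{17}. First I would record the local description supplied by Theorem 1: a function $\phi$ is holomorphic precisely when, at every vertex $e_0$ of $L$, the edge-increments $a_i=\phi(e_i)-\phi(e_0)$ (taken over $e_i\sim e_0$) satisfy the two equations $\sum_i a_i=0$ and $\sum_i a_i^2=0$. Fixing once and for all an orientation of $L$, I would introduce one variable $x_e$ for each edge $e\in E$, standing for the increment of $\phi$ across $e$. Each vertex then contributes one linear equation (degree $1$) and one quadratic equation (degree $2$), so the full holomorphicity system consists of $2n$ equations of total degree $n\cdot 1+n\cdot 2=3n$ in $|E|$ variables.

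Next I would apply the Chevalley--Warning theorem to this system. Since the number of variables $|E|$ strictly exceeds the total degree $3n$ by hypothesis ($|E|>3n$, equivalently $\chi(L)>2n$ with $\chi(L)=|E|-n$), the number of common zeros of the system is divisible by the characteristic $p$. The identically-zero assignment $x_e\equiv 0$, which corresponds to the constant functions, is always a solution, so there must be at least $p-1$ further solutions; in particular a nonzero vector $x=(x_e)_{e\in E}$ solving the system exists.

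Finally I would translate a nonzero solution back into a nonconstant function on $V$. Choosing a base vertex and integrating the increments $x_e$ along paths should produce the desired $\phi$, and since $x\neq 0$ the resulting $\phi$ would be nonconstant, hence a nontrivial holomorphic function.

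The main obstacle is precisely this last translation step: the variables $x_e$ cannot be treated as genuinely free, because to define a single-valued function $\phi$ on $V$ the increments must be path-independent, i.e. $x$ must be a coboundary (its signed sum around every cycle must vanish). The linear equations above only force the signed sums at vertices to vanish, placing $x$ in the cycle space rather than the cut space, so a Chevalley--Warning solution need not integrate to a function. To make the argument rigorous I would fix a spanning tree $T$, use its $n-1$ edges to define $\phi$ freely, and re-express the $|E|-(n-1)$ cycle-consistency conditions as additional linear constraints; the real content of the theorem is then to verify that, when $|E|>3n$, the combined linear-plus-quadratic system still admits a solution outside the constants. Controlling the interaction between the surplus of edges $|E|-n$ and these consistency constraints is the delicate point, and is where the hypothesis $\chi(L)>2n$ must enter in an essential way.
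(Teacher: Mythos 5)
Your first two paragraphs reproduce the paper's own proof exactly: the paper also takes one variable per edge increment, counts $2n$ equations ($n$ linear, $n$ quadratic, total degree $3n$) in $|E|$ variables, and invokes Theorem 6.8 of Lidl--Niederreiter (a Chevalley--Warning-type result for systems) to conclude. But the obstruction you raise in your last paragraph is a genuine gap, and it is present in the paper's proof as well, silently: the Chevalley--Warning theorem produces a nonzero vector $(x_e)_{e\in E}$ satisfying the vertex equations, and the linear equations only place this vector in the kernel of the incidence matrix (the cycle space), whereas for the increments to integrate to a single-valued $\phi:V\to\mathbb{F}_q$ the vector must be a coboundary, i.e.\ lie in the cut space, which has dimension only $n-c$ (with $c$ the number of components). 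These are different subspaces of $\mathbb{F}_q^{|E|}$, and precisely under the hypothesis $|E|>3n$ the graph has large cycle rank, so a generic solution of the system does not define any function on $V$. The one situation where the edge increments really are free variables is a tree, but there $|E|=n-1<3n$ and the hypothesis fails. So the paper's argument, and your faithful reconstruction of it, both break at the same step, and you were right to flag it.

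Your sketched repair, however, cannot close the gap, and a degree count shows why: imposing the cycle-consistency conditions adds $|E|-n+c$ independent linear equations, raising the total degree of the system to $3n+(|E|-n+c)=2n+|E|+c>|E|$, so the Chevalley--Warning hypothesis (number of variables exceeding total degree) can never be met, no matter how large $|E|$ is relative to $n$. Equivalently, eliminating via a spanning tree reduces to the $n$ vertex values $\phi(v)$ as variables, against total degree $3n$, which is hopeless for the same reason; no hypothesis of the form $|E|>cn$ rescues the naive count, so the condition $\chi(L)>2n$ cannot ``enter essentially'' along this route. If the theorem is to be proved, a different tool is needed: for instance, point-count estimates (Weil or Lang--Weil type bounds, which appear in the paper's bibliography) for the affine variety cut out by the $2n$ vertex equations in the $n$ vertex variables for $q$ large, or a constructive vertex-by-vertex extension argument in the spirit of the tree case, Theorem \ref{17}, adapted to graphs with cycles. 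As it stands, your proposal is an honest reconstruction of the paper's proof together with a correct diagnosis of why that proof is incomplete, but it does not itself yield a complete proof.
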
 
\begin{proof}
By the Theorem \ref{4} we must solve a system of $2n$ equations on $E$ variables.  $n$ equations have a degree equal with 2 and the other equations have degree equal with 1. Summation of equation's degree are equal with $3n$. Therefore, by Theorem 6.8. of \cite{R} there exists nontrivial solutions for this set of equations. So, we have nontrivial holomorphic functions on these finite graphs.
\end{proof}

\section{${Z}_p-$valued Holomorphic functions and dynamics}
\begin{theorem}
There are nontrivial ${Z}_p$ $p>5$ valued holomorphic functions on a tree with no  vertex of degree less than 6.
\end{theorem}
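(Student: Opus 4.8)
The plan is to mirror the construction used for $\mathbb{F}_q$ in Theorem \ref{17}, replacing the Chevalley--Warning input by the local theory of quadratic forms over the $p$-adic field. By \ref{4}, asking $\phi$ to be holomorphic at a vertex $v$ of degree $d$ is exactly asking that the differences $a_i=\phi(e_i)-\phi(v)$ over the neighbours $e_i\sim v$ satisfy the one linear and one quadratic relation $\sum_i a_i=0$ and $\sum_i a_i^2=0$ in $\mathbb{Z}_p$. I would build $\phi$ recursively, outward from a root $e_0$: once $\phi$ has been defined on a vertex and its parent, the remaining unknowns at that vertex are the $d-1$ ``child'' differences, constrained by the same two equations with the already-fixed parent difference $a_0\in\mathbb{Z}_p$ moved to the right-hand side. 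The governing fact is that every quadratic form over $\mathbb{Q}_p$ in at least five variables is isotropic, and indeed universal; this is what forces the hypothesis $\deg\ge 6$, since eliminating one variable by the linear relation leaves a quadratic form in $d-1\ge 5$ variables at every vertex.

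First I would treat the root. Substituting $a_n=-\sum_{i<n}a_i$, the condition $\sum_i a_i^2=0$ becomes the vanishing of the quadratic form $\sum_{i<n}a_i^2+(\sum_{i<n}a_i)^2$, whose matrix is $I+J$ of size $n-1$ and whose determinant is $n$. With $n-1\ge 5$ this form is isotropic over $\mathbb{Q}_p$, so it has a nonzero $\mathbb{Q}_p$-solution; clearing denominators yields a primitive vector in $\mathbb{Z}_p^{\,n}$, which cannot have all coordinates equal (a constant vector summing to $0$ is zero). Hence the resulting $\phi$ on $N(e_0)$ is $\mathbb{Z}_p$-valued and nonconstant, and the whole function will be nontrivial regardless of the later choices.

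The main work, and the main obstacle, is the interior step, because there the parent difference $a_0$ is already pinned in $\mathbb{Z}_p$ and cannot be rescaled: I must solve the inhomogeneous system $\sum_{j=1}^{d-1}a_j=-a_0$ and $\sum_{j=1}^{d-1}a_j^2=-a_0^2$ \emph{inside} $\mathbb{Z}_p$, not merely in $\mathbb{Q}_p$. For this I would reduce modulo $p$, solve the reduced system over $\mathbb{F}_p$, and lift by Hensel's lemma. The Jacobian of $(\sum_j a_j,\ \sum_j a_j^2)$ has first row $(1,\dots,1)$ and second row $(2a_1,\dots,2a_{d-1})$, so its $2\times 2$ minors are $2(a_j-a_i)$; thus a mod-$p$ solution lifts to a genuine $\mathbb{Z}_p$-solution as soon as $p$ is odd and two of the child differences are distinct mod $p$. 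Such a smooth, nonconstant $\mathbb{F}_p$-point exists because, after using the linear relation, one is counting points on a quadric in $d-2\ge 4$ variables whose associated form has determinant $d-1$, and Lebesgue's count (exactly as in Theorem \ref{a}) makes this variety far larger than its finitely many constant points. Keeping both reduced forms nondegenerate modulo $p$ in the smallest admissible case $d=6$ requires the root determinant $6$ and the interior determinant $5$ to be units, i.e. $p\neq 2,3,5$; together with the oddness already needed for the Jacobian, this is precisely the hypothesis $p>5$.

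Running this recursion over the whole tree then produces a $\mathbb{Z}_p$-valued $\phi$ for which both $\phi$ and $\phi^2$ are harmonic, nonconstant by the root step, proving the theorem. The one place that genuinely needs care is the passage from $\mathbb{Q}_p$-solvability to $\mathbb{Z}_p$-solvability at interior vertices: the isotropy statement over $\mathbb{Q}_p$ is cheap, but integrality of the child differences is not automatic, and it is exactly the smoothness of the reduced system, guaranteed by $p>5$ and $\deg\ge 6$, that lets Hensel's lemma supply honest $p$-adic integer solutions while preserving the values already chosen upstream.
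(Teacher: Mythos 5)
You have proved a genuinely different statement from the one the paper intends. In this paper $Z_p$ denotes the cyclic ring $\mathbb{Z}/p\mathbb{Z}$, not the ring of $p$-adic integers: the abstract announces ranges that are ``a given finite field or a cyclic group'', the blackboard-bold $\mathbb{Z}_p$ is reserved for the inverse limit of the rings $Z_{p^n}$ (introduced only \emph{after} this theorem), and the paper's own proof works entirely with the congruence $\sum_{i=1}^{n}x_i^2\equiv b \pmod p$. Its key step is elementary: fix a fifth variable arbitrarily, write a nonnegative integer representative of $b-x_5^2$ as a sum of four integer squares by Lagrange's four-square theorem, and reduce mod $p$ — no quadratic-form theory over $\mathbb{Q}_p$, no Hensel lifting. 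Your $p$-adic construction does not automatically yield the paper's claim: you would still need to observe that reducing your $\phi$ modulo $p$ remains holomorphic and is nonconstant (the latter does follow from your Hensel step, since your chosen mod-$p$ points have two child differences distinct mod $p$, but you never draw this conclusion). Note also the irony that the mod-$p$ solvability your lifting step requires at each vertex is exactly what the paper extracts in one line from Lagrange's theorem; once one has the mod-$p$ statement, the local theory of quadratic forms is superfluous for the theorem as stated.

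There is also a genuine hole inside your own argument, independent of the interpretive issue. Your interior step relies on the reduced quadratic form (matrix $I+J$ of size $d-2$, determinant $d-1$) being nondegenerate modulo $p$, and on the Lebesgue-type count from Theorem \ref{a}; you verify this only in the minimal case $d=6$ and assert that the hypothesis $p>5$ is ``precisely'' the condition that the determinants $5$ and $6$ be units. But the theorem allows \emph{every} vertex degree $\geq 6$: take $p=7$ and a vertex of degree $8$, so the interior determinant is $d-1=7\equiv 0\pmod 7$ (similarly a root of degree $7$ gives determinant $7$), and the form degenerates mod $p$, so the point count you cite does not apply as written and your recursion stalls at any vertex with $p\mid d-1$ (respectively $p\mid d$ at the root). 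The gap is fixable — the form still has rank $d-3\geq 3$ modulo $p$, hence represents every element of $\mathbb{F}_p$ with far more than the at most $p$ constant solutions, or one can simply produce the mod-$p$ point by the paper's Lagrange trick and then lift — but as written your diagnosis of the role of $p>5$ is wrong, and the induction is not justified for general trees satisfying the hypothesis.
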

\begin{proof}
The first part of the proof is similar to the proof of theorem \ref{17}. For the last part of the discussion we must prove that the equation $ \sum_{i=1}^nx_i^2=b(mod\ p)$ for $n>4$ has a nontrivial solution. Now we use the Lagrange's four-square theorem \cite{S} which asserts that every natural number is equal with the summation of at most 4 square numbers. Therefore every choice of fifth variable leads to a solution which is not trivial.
\end{proof}
Regular trees are important in the study of p-adic field $\mathbb{Q}_p$ and  $p$-adic ring $\mathbb{Z}_p$ which is the inverse limit of the finite rings ${Z_{p^n}}$. The Bruhat-Tits tree of a p-adic field is a  regular $p+1$- tree.

Let $s,t$ be two classes of neighboring elements of the Bruhat-Tits tree over $\mathbb{Q}_2$. This tree is isomorphic to $Tr_3$. Then we have the following Theorem.
\begin{theorem}
Let $Aut_{s,t}(\mathbb{Q}_2)$ be the set of automorphisms of the $2-$adic field $\mathbb{Q}_2$ which preserve $s$ and $t$. Then  
there is a one-one correspondence between $Aut_{s,t}(\mathbb{Q}_2)$ and $\mathbb{C}$-valued holomorphic functions over the Bruhat-tits tree which have  fixed values $\alpha$ and $\beta$ ,$\alpha \ne \beta$, on $s$ and $t$.
\end{theorem}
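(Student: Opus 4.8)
The plan is to read off the pointwise structure of a $\mathbb{C}$-holomorphic function on the $3$-regular tree $Tr_3$ from the local identities of equation \ref{4}, and then to match the combinatorial data this produces against the edge-stabilizer in the automorphism group of the Bruhat--Tits tree. First I would specialize the two conditions $\sum_i a_i=0$ and $\sum_i a_i^2=0$ to a vertex of degree $3$. Writing $e_2=a_1a_2+a_1a_3+a_2a_3$, the identity $2e_2=(\sum_i a_i)^2-\sum_i a_i^2=0$ gives $e_2=0$, so the three increments $a_1,a_2,a_3$ are the three roots of $z^3=a_1a_2a_3$; that is, they equal $r,r\omega,r\omega^2$ for some $r\in\mathbb{C}$, with $\omega=e^{2\pi i/3}$. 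Hence at every vertex the three edge-increments are forced to be a full set of scaled cube roots of unity, which is exactly the ``special dynamics'' mentioned in the introduction.

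Second, I would propagate this structure outward from the edge $st$. If a vertex $v$ is reached from its parent $u$ with incoming increment $g=\phi(v)-\phi(u)\neq 0$, then $-g=\phi(u)-\phi(v)$ is one of the three increments at $v$, so the set of increments at $v$ is $\{-g,-g\omega,-g\omega^2\}$ and the two increments to the children of $v$ are $-g\omega$ and $-g\omega^2$ (a rotation by $\pm\pi/3$, since $-\omega=e^{-i\pi/3}$ and $-\omega^2=e^{i\pi/3}$). Because $\alpha\neq\beta$, every increment along the tree is a nonzero multiple of $\beta-\alpha$, so the two child-values are always distinct. The only freedom at each vertex is the binary choice of which child receives which of the two rotated increments; treating the edge $st$ as the root, I get exactly one such bit at every vertex. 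Therefore the set $Hol_{\alpha,\beta}$ of holomorphic functions with $\phi(s)=\alpha,\ \phi(t)=\beta$ is parametrized by $\{0,1\}^{V}$, and conversely every element of $\{0,1\}^{V}$ yields a function satisfying \ref{4} at every vertex, hence a genuine holomorphic function.

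Third, I would treat the automorphism side the same way. An automorphism of the Bruhat--Tits tree of $\mathbb{Q}_2$ fixing the adjacent pair $s,t$ fixes the edge $st$ pointwise and is then determined, vertex by vertex, by whether it swaps the two subtrees hanging below that vertex; this is again one bit per vertex, so the edge-stabilizer is the iterated permutational wreath product whose underlying set is $\{0,1\}^{V}$. I would then realize the correspondence through the action $\sigma\cdot\phi=\phi\circ\sigma^{-1}$ of $Aut_{s,t}(\mathbb{Q}_2)$ on $Hol_{\alpha,\beta}$ and show it is simply transitive: given $\phi_0,\phi\in Hol_{\alpha,\beta}$, build $\sigma$ recursively by setting $\sigma(s)=s$, $\sigma(t)=t$ and, at each already-matched vertex, using the unique bijection of its two children that sends the value-pair of $\phi_0$ to the value-pair of $\phi$ (unique because the two child-values are distinct and agree as unordered pairs once the parent is matched). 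Fixing a base function $\phi_0$ then turns $\sigma\mapsto\sigma\cdot\phi_0$ into the desired one-one correspondence.

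The main obstacle is not the holomorphic computation, which is essentially forced, but pinning down the precise meaning of $Aut_{s,t}(\mathbb{Q}_2)$ and proving that it coincides with the \emph{full} tree edge-stabilizer $\{0,1\}^{V}$ rather than the much smaller image of the $PGL_2(\mathbb{Q}_2)$-Iwahori subgroup, which does not act as all homeomorphisms of the boundary. Making the identification of these ``field automorphisms'' with tree automorphisms canonical, and checking its compatibility with the $2$-adic (binary) coordinatization of the ends of $Tr_3$, is where the real work lies; once that identification is in place, the bijection follows from the two $\{0,1\}^{V}$ parametrizations above.
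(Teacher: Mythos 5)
Your argument is, in substance, the paper's own proof with the details filled in. The paper also roots everything at the edge $\bar{st}$: it invokes (via \cite{Z}) the same forced local dynamics --- your computation that $\sum_i a_i=\sum_i a_i^2=0$ at a trivalent vertex forces the three increments to be $r,r\omega,r\omega^2$, so the only freedom in extending past a vertex is the binary swap of the two outward increments (``choose $j$ or $j^2$'' in the paper's wording) --- and it concludes that $Aut(A(s,t))\times Aut(A(t,s))$, the graph automorphisms of the two halves fixing $s$ and $t$, acts simply transitively on the holomorphic functions with $\phi(s)=\alpha\ne\beta=\phi(t)$; fixing a base function then yields the bijection, which is exactly your third step. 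Your version is more careful than the paper's on two points the paper leaves implicit: the explicit verification that $\alpha\ne\beta$ keeps all increments nonzero (which is what makes the action free), and the recursive construction of the matching automorphism (which is what makes it transitive).

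The one caveat concerns the obstacle you flag at the end, and here the comparison is instructive: the paper does not do that work either. After the statement, field automorphisms are never mentioned again; the proof simply treats $Aut_{s,t}(\mathbb{Q}_2)$ as the full edge-stabilizer in the automorphism group of the Bruhat--Tits tree. Your instinct that this identification is the real difficulty is, if anything, an understatement: read literally, the group of field automorphisms of $\mathbb{Q}_2$ is trivial, since $\mathbb{Q}_2$ is rigid (a field automorphism preserves squares, hence the $2$-adic valuation, hence is continuous and fixes the dense subfield $\mathbb{Q}$, so it is the identity), while the set of holomorphic functions in question is uncountable --- your $\{0,1\}^{V}$ parametrization. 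Under the literal reading the theorem is therefore false, and the only reading under which it holds is the tree-automorphism one that both you and the paper actually prove. So you should not try to close that gap --- it cannot be closed --- but you should state explicitly, as the paper fails to do, that $Aut_{s,t}(\mathbb{Q}_2)$ must be interpreted as the group of automorphisms of the Bruhat--Tits tree fixing the vertices $s$ and $t$, not as automorphisms of the field.
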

\begin{proof} Let $A_1$ and $A_2$ be two connected subtrees of  $Tr_3$ obtained by deleting the edge $\bar{st}$. Let $A(s,t)=A_1 \cup \bar{st}$ , $A(t,s)=A_2 \cup \bar{st}$. Let $H_{\alpha,\beta}$ be the space of holomorphic functions on $A(s,t)$
with prescribed   values $\phi(s)=\alpha$ and $ \phi(t)=\beta$.
Let $Aut(A)$ be the automorphism group of the graph $A(s,t)$.  $s$
 and $t$ do not change under the action of the elements of $Aut(A)$. Also, for any element $w$ of $A(s,t)$
, there are
elements of  the group which exchange  two neighborhoods of $w$ which are not between $w$ and $s$. On the other hand, this exchanging was the only  freedom in
determining the holomorphic function(which is obtained by the induced dynamics to choose $ j$ or $j^2$ \cite{Z}). Equivalently, right action of $Aut(A(s,t))$ on $H_{\alpha,\beta}$
is transitive. Now it is easy to see that the action of  $Aut(A(s,t))\times Aut(A(t,s)) $ on $\mathbb{C}$-valued holomorphic functions over the Bruhat-tits tree with the fixed values $\alpha \ne \beta$ on $s,t$ is simply transitive.
This completes the proof.
\end{proof}
Let we consider the special case of $G=Tr_3$. There is no $Z_3$-holomorphic function on $G=Tr_3$.  Now we  consider the set of Holomorphic functions from 
$G$ to ${Z}_9$. The equation $a+b+c=a^2+b^2+c^2=0$ has 3 different solutions(up to permutation) $(0,0,0),(3,3,3),(0,3,6)$
so we can construct the set of  holomorphic functions using a dynamics over $G$. In fact let we consider the dynamics obtained by the random choice of elements 0, 3 or 6 on edge initiated from each vertex. This dynamics is equivalent to all holomorphic functions on $Tr_3$ when we neglect the neighboring  edges with constant values 0 or 3.   


\end{document}